\newtheorem{thm}{Theorem}[section]
\newtheorem{defi}{Definition}[section]
\newtheorem{lem}{Lemma}[section]
\newtheorem{cor}{Corollary}[section]
\newtheorem{question}{Question}[section]
\newcommand\lemref[1]{Lemma~\ref{#1}}
\newcommand\thmref[1]{Theorem~\ref{#1}}
\begin{document}
\title[]{Isomorphism classes and stably isomorphisms of  double  Danielewski varieties}
\author{Xiaosong Sun, Shuai Zeng}
\address{Xiaosong Sun: School of Mathematical Sciences, Jilin University,
	Changchun, Jilin, 130012,China}
\email{sunxs@jlu.edu.cn}
\address{Shuai Zeng: School of Mathematical Sciences, Jilin University,
	Changchun, Jilin, 130012,China}
\email{zengshuai22@jlu.edu.cn}

\thanks{This work was supported by the NSFC (12371020, 12171194), NSFJP
 (20220101019JC, 20210101469JC)}
\subjclass[2010]{14R10, 13N15}
\keywords{Cancellation Problem, Danielewski varieties, ML-invariant, stably isomorphisms}	
\maketitle

\begin{abstract}
	The interest in Danielewski varieties arose from the study of the Cancellation Problem. In this paper, we study the isomorphism classes and stably isomorphisms of double Danielewski varieties, and show that they are counterexamples of the Cancellation Problem.
\end{abstract}

\section{Introduction}	

The Cancellation Problem is a central problem in affine algebraic geometry\cite{EakinHeinzer1973}. For a ring $R$, we denote by $R^{[n]}$ the polynomial ring over $R$ in $n$ variables.

\begin{question}(Cancellation Problem)
	Let $k$ be a field. If $F$ and $G$ are two finitely generated $k$-algebras such that $F^{[1]}\cong_{k}G^{[1]}$, then does it follow that  $F\cong_{k} G$?
\end{question}

In 1989, Danielewski \cite{danielewski1989cancellation} constructed a family of two-dimensional affine domains over the complex number field $\mathbb{C}$ which are counterexamples of the above question. For a non-constant polynomial $P(z)$ over $k=\mathbb{C}$ with distinct roots, Danielewski considered the  affine surfaces $D_{n}$ defined by $x^{n}y-P(z)=0$ in $\mathbb{A}_{k}^{3}$. Write $A(D_{n})$ for the coordinate ring of $D_{n}$. Now it is known that for any pair $(m,n)$ with $m\neq n,$ one has $D_{m}\ncong_{k} D_{n} $ but $D_{m} \times \mathbb{A}^{1}_{k}\cong_{k} D_{n}\times \mathbb{A}^{1}_{k}$, equivalently, $A(D_{m})\ncong_{k} A(D_{n})$ but $A(D_{m})^{[1]}\cong_{k} A(D_{n})^{[1]}$ (cf. \cite{fieseler1994complex}.)

In 2001, Makar-Limanov\cite{Makar2001} determined the automorphism group of the Danielewski surface $D_{n}$ by computing the Makar-Limanov invariant of $D_{n}$. When the field $k$ is of characteristic zero, the Makar-Limanov invariant $\textup{ML}(X)$ of an affine $k$-variety $X$ is the intersection of all the kernels of locally nilpotent derivations of the coordinate ring $A(X)$.

In 2019, Gupta and Sen \cite{gupta2019double} computed the Makar-Limanov invariant and isomorphism classes of double Danielewski surfaces defined  by equations of the form $\{x^{d}y-P(x,z)=0, x^{e}t-Q(x,y,z)=0\}$ in $\mathbb{A}_{k}^{4}$, where $k $ is a field of any characteristic, $d,e\in \mathbb{N}, P(x,z)$ is monic in $z$ and $Q(x, y,z)$ is monic in $y$ with $\deg_{z}P(x,z)\geq 2, \deg_{y}Q(x,y,z)\geq 2$ and then verified that these are counterexamples to the Cancellation Problem.

In 2007, Dubouloz \cite{dubouloz2007additive} considered a higher-dimensional analogue of Danie\-lewski surfaces. He constructed the Danielewski varieties defined by \[x_{1}^{d_{1}}\cdots x_{n}^{d_{n}}y-P(x_{1},\ldots,x_{n},z)=0\] in $\mathbb{A}_{\mathbb{C}}^{n+2}$, where $P(x_{1},\ldots, x_{n},z)$ is monic in $z$ and $\deg_{z}P>1$. He proved that they are counterexamples to the Cancellation Problem. In 2023,  Ghosh and Gupta \cite{GHOSH2023226} extended the result of Dubouloz \cite{dubouloz2007additive} to arbitrary characteristic.
	
In \cite{SunZeng2023}, we studied a higher-dimensional analogue of the double Danielewski surfaces. More precisely, we considered a family of affine varieties $L_{[d],[e]}$ of arbitrary dimension greater or equal to 2 over a field $k$, defined by a pair of equations
\[\{\underline{x}^{[d]}y-P(\underline{x},z)=0, \underline{x}^{[e]}t-Q(\underline{x},y,z)=0\} \ \text{in}\  \mathbb{A}^{n+3}_{k},\] where $[d],[e]\in \mathbb{N}^{n}_{\geq 1}, \underline{x}:=(x_{1},\ldots, x_{n}), P(\underline{x}, z)$ is monic in $z$ and $Q(\underline{x},y,z)$ is monic in $y$ with $\deg_{z}P(\underline{x},z)\geq 2,\deg_{y}Q(\underline{x},y,z)\geq 2$. We call them $\mathit{double\ Danielewski \ varieties}$.
We described the Makar-Limanov invariant and locally nilpotent derivations of these varieties in that paper.

In this paper, we will investigate the double Danielewski varieties further. In Section 2, we study the isomorphism classes of these varieties, and then we study in Section 3 the stably isomorphisms of these varieties, and obtain some counterexamples of the Cancellation Problem.
	
\section{Isomorphism classes}	
In this section we investigate the isomorphism classes of a family of varieties including the double Danielewski varieties.

We consider two such varieties, denoted by $L$ and $L'$,
\[L=\frac{k[X,Y,Z,T]}{(\underline{X}^{[d]}Y,-P(\underline{X},Z), \underline{X}^{[e]}T-Q(\underline{X},Y,Z)),}\]
\[L'=\frac{k[X,Y,Z,T]}{(\underline{X}^{[d']}Y,-P'(\underline{X},Z), \underline{X}^{[e']}T-Q'(\underline{X},Y,Z) )}\]
where $[d],[d'],[e],[e']\in \mathbb{N}_{\geq 1}^{n}$, $\underline{X}:= (X_{1},\ldots, X_{n})$, $P,P'\in k[\underline{X},Z]$ are monic polynomials in $Z$, and $Q,Q'\in k[\underline{X},Y,Z]$ are monic polynomials in $Y$, with $r=\deg_{Z}P(\underline{X},Z),r'=\deg_{Z}P'(\underline{X},Z)$ and $s=\deg_{Y}Q(\underline{X},Y,Z), s'=\deg_{Y}Q'(\underline{X},Y,Z)$. And we suppose the following conditions are satisfied.
 \begin{align}\label{eq: conditions}
	either\ & r\geq 2\  and \ s\geq 2 \\ \notag
	or\  & r\geq 2 \ and\  s=1\\   \notag
	or \ & r=1, s\geq 2 \ and \ [e]\in \mathbb{Z}_{\geq 2}^{n}.
\end{align}
Note that if $r\geq 2$ and $s\geq 2$ (resp. $r'\geq 2$ and $s'\geq 2$) then $L$ (resp. $L'$) is a double Danielewski variety.

 Let $\underline{x}, y, z,t$ and $\underline{x}', y',z',t'$ denote the images of $\underline{X},Y,Z,T$ in $L$ and $L'$ respectively.  By \cite[Theorem 3.10]{SunZeng2023}, $\mathrm{ML}(L)=k[\underline{x}], \mathrm{ML}(L')=k[\underline{x}'].$  Let $\sigma\in S_{n}$ be a permutation of order $n$.
Write $\sigma(x_{1},\ldots,x_{n}):=(x_{\sigma(1)},\ldots,x_{\sigma(n)}),$
$\sigma(d_{1},\ldots, d_{n}):=(d_{\sigma(1)},\ldots, d_{\sigma(n)})$ for the $\sigma$-actions, and for $\lambda=(\lambda_1,\ldots,\lambda_n)\in k^n$, we set $\lambda(\sigma(\underline{x})):=(\lambda_{1}x_{\sigma(1)},$
$\ldots, \lambda_{n}x_{\sigma(n)})$. The notation
$[e]\pm 1$ means to plus or minus $1$ in components of $[e]$, and $\underline{X}_{\hat{X_{i}}}$ means $\underline{X}\backslash\{X_{i}\}$.

\begin{thm}\label{thm: isomorphismthm}
$L\cong L'$ if and only if the following conditions hold:
	
	$\mathrm{(1)}$ $\{d_{1},\ldots,d_{n}\}=\{d'_{1},\ldots,d'_{n}\}$;
	
    $\mathrm{(2)}$ $\{e_{1},\ldots,e_{n}\}=\{e_{1}',\ldots,e_{n}'\}$;
	
	$\mathrm{(3)}$ $r=r', s=s'$;
	
	$\mathrm{(4)}$ There exist $\lambda:=(\lambda_{1},\ldots,\lambda_{n})\in (k^{*})^n,$ $\gamma\in k^{*},$ $\sigma\in S_{n},$ $\varrho(\underline{X})\in k[\underline{X}],$ $f(\underline{X},Z)\in k[\underline{X},Z]$ and $h(\underline{X},Y,Z)\in k[\underline{X},Y,Z]$ such that
	\begin{enumerate}
	\item  $P'(\lambda(\sigma(\underline{X})),\gamma Z+\varrho(\underline{X}))=\tau P(\underline{X},Z)+\underline{X}^{[d]}f(\underline{X},Z)$, where $\tau=\gamma^{r}(\in k^{*})$. In particular, $P'(\underline{0},\gamma Z+\varrho(\underline{0}))=\tau P(\underline{0},Z)$.
	\item  $Q'(\lambda(\sigma(\underline{X})), \upsilon Y+g(\underline{X},Z), \gamma Z+\varrho(\underline{X}))=\kappa Q(\underline{X},Y,Z)+\underline{X}^{[e]}h(\underline{X},Y,Z)$, where $\upsilon=\lambda^{[-\sigma(d)]}\tau, \kappa=\upsilon^{s}$ and $g(\underline{X},Z)=\lambda^{[-\sigma(d)]}f(\underline{X},Z)$. In particular, $Q'(\underline{0},\upsilon Y+g(\underline{0},Z), \gamma Z+\varrho(\underline{0}))=\kappa Q(\underline{0}, Y,Z).$
	\end{enumerate}
Moreover, if $\psi: L'\rightarrow L$ is an isomorphism, then
\begin{align}
	\psi(x_{1}')&=\lambda_{1}x_{\sigma(1)}\notag\\
	\psi(x_{2}')&=\lambda_{2}x_{\sigma(2)}\notag\\
	&\vdots  \notag\\
	\psi(x_{n}')&=\lambda_{n}x_{\sigma(n)}\notag
\end{align}
\[\psi(z')=\gamma z +\varrho(\underline{x}),\]
\[\psi(y')=\upsilon y+g(\underline{x},z)\ \text{and}\ \psi(t')=\lambda^{[-\sigma([e])]}(\kappa t+h(\underline{x},y,z)).\]

\end{thm}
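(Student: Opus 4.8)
The plan is to prove the two directions of the equivalence separately, reading the displayed formulas for $\psi$ off the forward direction. Throughout I would work inside the localizations $L[\underline{x}^{-1}]=k[x_1^{\pm1},\dots,x_n^{\pm1},z]$ and $L'[\underline{x}'^{-1}]=k[x_1'^{\pm1},\dots,x_n'^{\pm1},z']$, which arise by solving the defining relations for $y,t$ (resp. $y',t'$) after inverting the $x_i$; here I use that $P,P'$ are monic in $Z$, so $y=\underline{x}^{-[d]}P$ and $t=\underline{x}^{-[e]}Q$, and that $P(\underline{0},z)\neq 0$ (hence $x_i\nmid P$).

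For the forward direction, suppose $\psi\colon L'\to L$ is an isomorphism. The first step is to invoke $\mathrm{ML}(L)=k[\underline{x}]$ and $\mathrm{ML}(L')=k[\underline{x}']$ from \cite[Theorem 3.10]{SunZeng2023}: since the Makar-Limanov invariant is preserved by isomorphisms, $\psi$ restricts to a $k$-algebra isomorphism $k[\underline{x}']\xrightarrow{\sim}k[\underline{x}]$. The crucial and most delicate step is to upgrade this to the monomial form $\psi(x_i')=\lambda_i x_{\sigma(i)}$ with $\lambda_i\in k^*$ and $\sigma\in S_n$. For this I would argue that the coordinate hyperplanes $\{x_i=0\}$ are intrinsically distinguished among all hypersurfaces of $\operatorname{Spec}k[\underline{x}]$: they are exactly the divisors over which the fibration $\operatorname{Spec}L\to\operatorname{Spec}k[\underline{x}]$ degenerates (the general fibre being an affine line coming from $z$, while over $x_i=0$ the factor $\underline{x}^{[d]}$, resp. $\underline{x}^{[e]}$, drops and the fibre splits). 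An isomorphism must carry this distinguished divisorial data of $L'$ onto that of $L$, so it permutes the hyperplanes and is linear without constant term on each coordinate, yielding $\sigma$ and $\lambda$ simultaneously. I expect this to be the main obstacle, since it is the only genuinely geometric input and is precisely where the hypotheses \eqref{eq: conditions} are needed to guarantee the fibration structure.

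With $\psi(x_i')=\lambda_i x_{\sigma(i)}$ in hand, $\psi$ extends to the localizations, so $\psi(z')$ generates $k[\underline{x}^{\pm1},z]$ over $k[\underline{x}^{\pm1}]$; hence $\psi(z')=uz+v$ with $u$ a unit of $k[\underline{x}^{\pm1}]$ and $v\in k[\underline{x}^{\pm1}]$. Comparing the coefficient of $z^{r'}$ on the two sides of $\psi(\underline{x}'^{[d']}y')=P'(\psi(\underline{x}'),\psi(z'))$ and using that $P'$ is monic in $Z$ shows $u^{r'}\equiv\tau\pmod{\underline{x}^{[d]}}$ for a scalar; since $\underline{x}^{[d]}$ has no constant term and the right-hand side lies in $L$ (so $u$ carries no negative powers), this forces $u=\gamma\in k^*$ and then $v=\varrho\in k[\underline{x}]$, i.e. $\psi(z')=\gamma z+\varrho(\underline{x})$. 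The remaining identities are bookkeeping: solving the two relations of $L'$ for $\psi(y')$ and $\psi(t')$ gives $\psi(y')=\upsilon y+g(\underline{x},z)$ and $\psi(t')=\lambda^{[-\sigma([e])]}(\kappa t+h(\underline{x},y,z))$, and substituting everything back produces exactly (4)(1)--(2). Here the exact monomial $\underline{x}^{[d]}$ is intrinsically the minimal one clearing the denominator of $y$ (and $\underline{x}^{[e]}$ that of $t$); transporting this minimality through $\psi$ forces $d_i'=d_{\sigma(i)}$ and $e_i'=e_{\sigma(i)}$, hence the multiset equalities (1)--(2). Specializing (4)(1)--(2) at $\underline{x}=0$ and comparing $Z$-degrees yields $r=r'$, $s=s'$, i.e. (3), together with $\tau=\gamma^{r}$, $\upsilon=\lambda^{[-\sigma(d)]}\tau$ and $\kappa=\upsilon^{s}$.

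For the converse, given (1)--(4) I would define $\psi$ by the displayed formulas, choosing $\sigma$ to realize $d_i'=d_{\sigma(i)}$ and $e_i'=e_{\sigma(i)}$ from (1)--(2). A direct computation, using (4)(1) to rewrite $P'(\lambda(\sigma(\underline{x})),\gamma z+\varrho)$ as $\underline{x}^{[d]}(\tau y+f)$ and (4)(2) analogously, shows that $\psi$ annihilates both defining relations of $L'$, so it is a well-defined $k$-algebra homomorphism $L'\to L$. Finally $\psi$ is an isomorphism because the construction is symmetric: the inverse substitutions (built from $\lambda^{-1}$, $\gamma^{-1}$, $\sigma^{-1}$ and the corresponding corrected polynomials) define a homomorphism $L\to L'$ that inverts $\psi$ on the generators $\underline{x},z,y,t$, completing the proof.
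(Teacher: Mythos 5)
Your overall skeleton matches the paper's (ML-invariant forces $\psi(k[\underline{x}'])=k[\underline{x}]$, then determine $\psi(z')$, compare $P$ with $P'$ to get $\psi(y')$, compare $Q$ with $Q'$ to get $\psi(t')$, and construct the map explicitly for the converse), but you diverge at two key points. For the crucial step $\psi(x_i')=\lambda_i x_{\sigma(i)}$ you propose a geometric argument: the degeneracy locus of the intrinsic fibration $\operatorname{Spec}L\to\operatorname{Spec}\mathrm{ML}(L)$ is the union of coordinate hyperplanes, so $\psi$ permutes the ideals $(x_i)$ and is therefore monomial. The paper instead argues algebraically: writing $x_i'=L_i+\mu_i$, it uses $\underline{x}^{[s]}P'(\underline{x}',z')\in(L_1+\mu_1)^{d_1'}\cdots(L_n+\mu_n)^{d_n'}k[\underline{x},z]$ together with monicity of $P'$ in $Z$ to force each $L_i+\mu_i$ to divide a monomial, hence $\mu_i=0$. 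Your route is viable and arguably more conceptual, but to carry it out you must treat the fibers scheme-theoretically: when $P(\underline{X},Z)|_{X_i=0}$ has a repeated root the special fiber is non-reduced rather than ``split,'' and its reduction can still be an affine line, so ``the fibre splits'' is not literally correct in all cases allowed by \eqref{eq: conditions}; the degeneracy criterion must be ``fiber not isomorphic to $\mathbb{A}^1$ as a scheme.'' For the converse, you build an explicit inverse by symmetric substitution, whereas the paper gets away with less work: the formulas give a surjection $L'\to L$ of affine domains of equal Krull dimension, hence an isomorphism. Both work; yours avoids invoking that $L'$ is a domain but costs a computation.

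The one place where your proposal is genuinely under-specified is the derivation of the multiset equalities (1)--(2). You appeal to ``the minimal monomial clearing the denominator of $y$'' being intrinsic, but $y$ itself is only defined by the presentation, not intrinsically; what is intrinsic is the pair $(L,\,k[\underline{x},z])$, and the invariant has to be phrased in those terms. The paper does this via the ideals $\underline{x}^{[K]}M\cap k[\underline{x},z]=(\underline{x}^{[K]},\underline{x}^{[K]-[d]}P(\underline{x},z))k[\underline{x},z]$ for $[K]$ large, comparing exponents on both sides using monicity of $P$ (and the analogous computation in $k[\underline{x},y,z]$ for $[e]$). Your sketch, once made precise, would have to reproduce essentially this computation, so you should regard that step as an obligation rather than ``bookkeeping.''
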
	
	
\begin{proof}
	Let $\psi : L\rightarrow L'$ be a $k$-algebra isomorphism. Replacing $L$ by $\psi(L)$, we may assume that $L=L'=M$. By Theorem 3.10 of \cite{SunZeng2023}, $\mathrm{ML}(M)=k[\underline{x}]=k[\underline{x}']$ and hence
\begin{align}\label{eq: xcoordichange1}
		x_{1}'&=L_{1}+\mu_{1}\notag\\
		x_{2}'&=L_{2}+\mu_{2}\notag\\
		               &\vdots  \notag\\
		x_{n}'&=L_{n}+\mu_{n}\notag
\end{align}
where $ L_{1}, \ldots, L_{n}$ are linear forms on $x_{1},\ldots, x_{n}$, and $\mu_{i}\in k[\underline{x}],i\in \{1,\ldots,n\}$ have no linear terms and $k(\underline{x})[z]=k(\underline{x}')[z']$. Since $M\cap k(\underline{x})=k[\underline{x}]$, we have, $z'=\gamma z+\varrho$ 
for some $\gamma(\underline{x}), \varrho(\underline{x})\in k[\underline{x}]$. Using symmetry, we obtain that $\gamma(\underline{x})\in k^{*}$, i.e.,
\begin{equation}\label{eq:zcoordichange}
	z'=\gamma z+ \varrho(\underline{x})
\end{equation}
for some $\gamma\in k^{*}, \varrho(\underline{x})\in k[\underline{x}]$. Hence,
\begin{equation}\label{eq: k[x,z]equality}
	k[\underline{x}, z]=k[\underline{x}', z'].
\end{equation}
As $y'\in M\subseteq k[\underline{x}, \underline{x}^{-1},z]$, there exists $[s]=(s_{1},\ldots, s_{n})\in \mathbb{Z}_{\geq 0}^{n}$ such that $\underline{x}^{[s]}y'\in k[\underline{x}, z]$. Since \[P'(\underline{x}',z')=\underline{x}^{{'}^{[d']}}y'= (L_{1}+\mu_{1})^{d_{1}'}\cdots (L_{n}+ \mu_{n})^{d_{n}'}y',\]  we have $\underline{x}^{[s]}P'(\underline{x}',z')\in (L_{1}+\mu_{1})^{d_{1}'}\cdots (L_{n}+ \mu_{n})^{d_{n}'} k[\underline{x},z]$. As $P'(\underline{X}, Z)$ is monic in $Z$, we deduce that $\mu_{i}=0, i\in \{1,\ldots,n\}$, and
\begin{align}\label{eq:xcoordichange2}
	x_{1}'&=\lambda_{1}x_{\sigma(1)}\notag\\
	x_{2}'&=\lambda_{2}x_{\sigma(2)}\notag\\
	&\vdots \\
	x_{n}'&=\lambda_{n}x_{\sigma(n)}\notag
\end{align}
where $\sigma\in S_{n}, \lambda_{i}\in k^{*}$ for $i\in \left\{1,\ldots, n\right\}$.

Now we show that $\{d_{1},\ldots,d_{n}\}=\{d'_{1},\ldots,d'_{n}\}$. Denote the maximum element in $\{d_{1},\ldots,d_{n},d_{1}',\ldots,d_{n}'\}$ by $k$ and let $[K]=(k,\ldots,k)$. Using \eqref{eq: k[x,z]equality} and \eqref{eq:xcoordichange2}, we have $\underline{x}^{[K]}M\cap k[\underline{x},z]=\underline{x}^{{'}^{[K]}}M\cap k[\underline{x}',z']$, i.e.,
\[(\underline{x}^{[K]}, \underline{x}^{([K]-[d])}P(\underline{x},z))k[\underline{x},z]=(\underline{x}^{{'}^{[K]}}, \underline{x}^{{'}^{([K]-[d'])}}, P'(\underline{x}', z'))k[\underline{x}',z'].\]
Therefore  $x_{1}^{k-d_{1}}\cdots x_{n}^{k-d_{n}}P(\underline{x},z)=\underline{x}^{[K]}g_{1}(\underline{x}',z')+ x_{1}^{(k-d_{\sigma^{-1}(1)}')}\cdots x_{n}^{(k-d_{\sigma^{-1}(n)}')}$
$g_{2}(\underline{x}',z')$,
where $g_{1}(\underline{x}',z'),g_{2}(\underline{x}',z')\in k[\underline{x}',z']$. It follows that
\[k-d_{i}\leq k-d'_{\sigma^{-1}(i)}, i\in \{1,\ldots,n\}\] as $P(\underline{X},Z)$ is monic in $Z$. By symmetry, we have $\{d_{1},\ldots,d_{n}\}$ are equal to $\{d'_{\sigma^{-1}(1)},\ldots,d'_{\sigma^{-1}(n)}\}$. So $\{d_{1},\ldots,d_{n}\}=\{d'_{1},\ldots,d'_{n}\}$.

Then we have\[\underline{x}^{[d]}M\cap k[\underline{x},z]= \underline{x}^{{'}^{\sigma([d])}}M\cap k[\underline{x}',z'],\] i.e.,
\begin{equation}
	(\underline{x}^{[d]}, P(\underline{x}, z))k[\underline{x},z]= (\underline{x}^{{'}^{\sigma([d])}},P'(\underline{x}', z'))k[\underline{x}',z'].
\end{equation}
Thus $P'(\underline{x}', z')=\tau' P(\underline{x},z)+ \underline{x}^{[d]}f'$ for some $\tau', f'\in k[\underline{x},z]$. Since $P,P'$ are monic in $Z$, using \eqref{eq:zcoordichange} and \eqref{eq:xcoordichange2}, we see that
\[r(=\deg_{Z}P)=r'(=\deg_{Z}P'),\] and $\tau' \equiv \gamma^{r} \mod \underline{x}^{[d]}k[\underline{x},z]$. Let $\tau =\gamma^{r}(\in k^{*})$. Replacing $\tau'$ by $\tau$, we have
\begin{equation}\label{eq:Pchange}
	P'(\underline{x}',z')= \tau P(\underline{x},z) +\underline{x}^{[d]}f(\underline{x},z)
\end{equation}
for some $f\in k[\underline{x},z]$. In particular, using \eqref{eq:zcoordichange} and \eqref{eq:xcoordichange2}, and putting $\underline{x}=\underline{0}$ in \eqref{eq:Pchange}, we obtained that
\[P'(\underline{0}, \gamma z+\varrho(0))=\tau P(\underline{0},z).\]
Now we have,
\begin{equation}\label{eq: ycoordichange}
	y'=\frac{P'(\underline{x}',z')}{\underline{x}^{{'}^{[d]'}}}=\frac{\tau P(\underline{x},z)+\underline{x}^{[d]}f(\underline{x},z)}{\lambda^{[\sigma{(d)}]}\underline{x}^{[d]}}=\upsilon y+g(\underline{x},z),
\end{equation}
where $\upsilon=\lambda^{[-\sigma{(d)}]}\tau\in k^{*}$ and $g=\lambda^{[-\sigma{(d)}]}f\in k[\underline{x},z]$. Therefore,
\begin{equation}\label{eq:k[x,y,z]equality}
	k[\underline{x},y,z] =k[\underline{x}',y',z'].
\end{equation}

Now we show that $\{e_{1},\ldots,e_{n}\}=\{e_{1}',\ldots,e_{n}'\}$. Denote the maximum element in $\{e_{1},\ldots,e_{n},e_{1}',\ldots,e_{n}'\}$ by $u$ and let $U=(u,\ldots,u)$. Then  \[\underline{x}^{[U]}M\cap k[\underline{x},y,z]=\underline{x}^{{'}^{[U]}}M\cap k[\underline{x}',y',z'],\] i.e.,
\[(\underline{x}^{[U]}, \underline{x}^{([U]-[e])}Q(\underline{x},y,z))k[\underline{x},y,z]=(\underline{x}^{{'}^{[U]}},\underline{x}^{{'}^{([U]-[e'])}}Q'(\underline{x}',y',z'))k[\underline{x}',y',z'].\]
Therefore  \begin{align*}& x_{1}^{u-e_{1}}\cdots x_{n}^{u-e_{n}}Q(\underline{x},y,z)\\&=\underline{x}^{[U]}g_{1}(\underline{x}',y',z')+ x_{1}^{(u-e_{\sigma^{-1}(1)}')}
\cdots x_{n}^{(u-e_{\sigma^{-1}(n)}')}
g_{2}(\underline{x}',y',z'),\end{align*}
where $g_{1}(\underline{x}',y',z'),g_{2}(\underline{x}',y',z')\in k[\underline{x}',y',z']$. Hence
\[u-e_{i}\leq u-e'_{\sigma^{-1}(i)}, i\in \{1,\ldots,n\}\] as $Q(\underline{X},Y,Z)$ is monic in $Y$. By symmetry, we have $\{e_{1},\ldots,e_{n}\}$ are equal to $\{e'_{\sigma^{-1}(1)},\ldots,e'_{\sigma^{-1}(n)}\}$. So $\{e_{1},\ldots,e_{n}\}=\{e'_{1},\ldots,e'_{n}\}$.

Now we have\[\underline{x}^{[e]}M\cap k[\underline{x},z]= \underline{x}^{{'}^{\sigma([e])}}M\cap k[\underline{x}',z'],\] i.e.,
\begin{equation}
	(\underline{x}^{[e]}, Q(\underline{x},y, z))k[\underline{x},y,z]= (\underline{x}^{{'}^{\sigma([e])}},Q'(\underline{x}', y',z'))k[\underline{x}',y',z'].
\end{equation}
Thus $Q'(\underline{x}', y',z')=\kappa' Q(\underline{x},y,z)+ \underline{x}^{[e]}h'$ for some $\kappa', h'\in k[\underline{x},y,z]$. Since $Q,Q'$ are monic in $Y$, using \eqref{eq:zcoordichange} and \eqref{eq: ycoordichange}, we get
\[s(=\deg_{Y}Q)=s'(=\deg_{Y}Q'),\] and $\kappa' \equiv \upsilon^{s} \mod \underline{x}^{[e]}k[\underline{x},y,z]$. Let $\kappa =\upsilon^{s}(\in k^{*})$. Replacing $\kappa'$ by $\kappa$, we have
\begin{equation}\label{eq:Qchange}
	Q'(\underline{x}',y',z')= \kappa Q(\underline{x},y,z) +\underline{x}^{[e]}h(\underline{x},y,z)
\end{equation}
for some $h\in k[\underline{x},y,z]$. In particular, using  \eqref{eq:zcoordichange},\eqref{eq:xcoordichange2} and \eqref{eq: ycoordichange}, and putting $\underline{x}=\underline{0}$ in \eqref{eq:Qchange}, we obtain that
\[Q'(\underline{0}, \upsilon y+g(\underline{0},z),\gamma z+\varrho(0))=\kappa Q(\underline{0},y,z).\]
Now we get
\begin{equation}\label{eq: tcoordichange}
	t'=\frac{Q'(\underline{x}',y',z')}{\underline{x}^{{'}^{[e]'}}}=\frac{\kappa Q(\underline{x},y,z)+\underline{x}^{[e]}h(\underline{x},y,z)}{\lambda^{[\sigma{(e)}]}\underline{x}^{[e]}}=\frac{\kappa t+h}{\lambda^{[\sigma{(e)}]}}.
\end{equation}
Conversely, suppose that the conditions $(1),(2),(3)$ and $(4)$ hold. Consider the $k$-algebra map $\xi: k[\underline{X},Y,Z,T]\rightarrow L$ defined by
\begin{align}
	\xi(X_{1})&=\lambda_{1}x_{\sigma(1)}\notag\\
	\xi(X_{2})&=\lambda_{2}x_{\sigma(2)}\notag\\
	&\vdots  \notag\\
	\xi(X_{n})&=\lambda_{n}x_{\sigma(n)}\notag\\
	\xi(Z)&=\gamma z +\varrho(\underline{x}),\notag\\
	\xi(Y)&=\upsilon y+g(\underline{x},z),\notag\\ \xi(T)&=\theta t+h'(\underline{x},y,z).\notag
\end{align}
where $\upsilon= \lambda^{[-\sigma([d])]}\gamma^{r},$ $~\theta=\lambda^{[-\sigma([e])]}\upsilon^{s},$ $~g(\underline{x},z)=\lambda^{[-\sigma([d])]}f(\underline{x},z)$ and $h'(\underline{x},y,z)=\lambda^{[-\sigma([e])]}h(\underline{x},y,z)$. Then clearly, \[\xi(\underline{X}^{[d]}Y-P'(\underline{X},Z))=\xi(\underline{X}^{[e]}T-Q'(\underline{X},Y,Z))=0.\] Thus $\xi$ induces a $k$-linear map $\overline{\xi}:L'\rightarrow L$, which is surjective. Since $L$ and $ L'$ are of same Krull dimension, we have $\overline{\xi}$ is an isomorphism.

\end{proof}	
As a byproduct, we can deduce the following result.
\begin{thm}
	Let $L$ be as in \thmref{thm: isomorphismthm} and let the parameters $r,s$ and $[e]$ satisfy  the condition \eqref{eq: conditions}. Denote by $S$  the subring $k[\underline{x},y,z]$ of $L$ and let $\zeta\in \mathrm{Aut}_{k}(L)$. Then:
	\begin{enumerate}
		\item  $\zeta(k[\underline{x},z])=k[\underline{x},z]$;
		\item  $\zeta(x_{i})=\lambda_{i}x_{\sigma(i)}$ for some $\sigma\in S_{n}, \lambda_{i}\in k^{*},  i\in \{1,\ldots,n\}$;
		\item  $\zeta((\underline{x}^{[d]},P(\underline{x},z))k[\underline{x},z])=(\underline{x}^{[d]},P(\underline{x},z))k[\underline{x},z]$;
		\item  $\zeta(k[\underline{x},y,z])=k[\underline{x},y,z]$;
		\item  $\zeta((\underline{x}^{[e]}, Q(\underline{x},y,z))S)=(\underline{x}^{[e]},Q(\underline{x},y,z))S$;
		\item $\zeta(t)=at +b$
		, where $a\in k^{*} $ and $b\in S$.
	\end{enumerate}
\begin{proof}
	It follows from the proof of \thmref{thm: isomorphismthm}.
\end{proof}
\end{thm}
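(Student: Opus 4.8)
The plan is to read this off from \thmref{thm: isomorphismthm} by regarding the automorphism $\zeta$ as an isomorphism $L\to L$ and applying that theorem with $L'=L$, so that the primed and unprimed defining data coincide ($P'=P$, $Q'=Q$, $[d']=[d]$, $[e']=[e]$). The assumption that $r,s,[e]$ satisfy \eqref{eq: conditions} is exactly what places us in the scope of \thmref{thm: isomorphismthm}. Each conclusion will then be extracted either from the explicit formulas in the ``Moreover'' part of \thmref{thm: isomorphismthm} or from the intermediate ideal identities built up in its proof; there is no new global argument to make, only a correct specialization.

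First I would dispatch (2) and (6) directly. The ``Moreover'' formulas give $\zeta(x_i)=\lambda_i x_{\sigma(i)}$ for some $\sigma\in S_{n}$ and $\lambda_i\in k^{*}$, which is (2), and $\zeta(t)=\lambda^{[-\sigma([e])]}(\kappa t+h(\underline{x},y,z))$, so that (6) holds with $a=\lambda^{[-\sigma([e])]}\kappa\in k^{*}$ and $b=\lambda^{[-\sigma([e])]}h(\underline{x},y,z)\in S$. Next I would prove (1) and (4): from the same formulas $\zeta$ sends the generators $x_1,\dots,x_n,z$ of $k[\underline{x},z]$ to $\lambda_i x_{\sigma(i)}$ and $\gamma z+\varrho(\underline{x})$, all lying in $k[\underline{x},z]$. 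Since $\sigma$ is a permutation and $\lambda_i,\gamma\in k^{*}$, the induced endomorphism of $k[\underline{x},z]$ is an invertible (monomial substitution in the $x_i$, unit-affine shift in $z$), hence an automorphism of $k[\underline{x},z]$, giving (1); applying the identical reasoning to the extra generator $y$, whose image is $\upsilon y+g(\underline{x},z)$ with $\upsilon\in k^{*}$, gives (4).

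For (3) and (5) I would invoke the contraction identities already established inside the proof of \thmref{thm: isomorphismthm}, namely $\underline{x}^{[d]}L\cap k[\underline{x},z]=(\underline{x}^{[d]},P(\underline{x},z))k[\underline{x},z]$ and $\underline{x}^{[e]}L\cap S=(\underline{x}^{[e]},Q(\underline{x},y,z))S$, which give the two ideals an intrinsic description as contractions of principal ideals. The one point requiring a small computation is that $\zeta$ preserves the principal ideals $\underline{x}^{[d]}L$ and $\underline{x}^{[e]}L$. Here I would use the refinement of the multiset equalities proved in \thmref{thm: isomorphismthm}: the permutation $\sigma$ governing the $x$-coordinate change satisfies $d_i=d'_{\sigma^{-1}(i)}$ and $e_i=e'_{\sigma^{-1}(i)}$, which for $L'=L$ becomes $d_{\sigma(i)}=d_i$ and $e_{\sigma(i)}=e_i$, i.e. $[d]$ and $[e]$ are constant on $\sigma$-orbits. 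Consequently $\zeta(\underline{x}^{[d]})=(\prod_i\lambda_i^{d_i})\,\underline{x}^{[d]}$ and $\zeta(\underline{x}^{[e]})=(\prod_i\lambda_i^{e_i})\,\underline{x}^{[e]}$ differ from $\underline{x}^{[d]}$ and $\underline{x}^{[e]}$ by units, so $\zeta(\underline{x}^{[d]}L)=\underline{x}^{[d]}L$ and $\zeta(\underline{x}^{[e]}L)=\underline{x}^{[e]}L$. Combined with (1) and (4), this shows $\zeta$ preserves both contractions, which is (3) and (5).

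The main obstacle is precisely this orbit-invariance step. Without the fact that the very $\sigma$ coming from the $x$-coordinate change fixes $[d]$ and $[e]$ (not merely that their multisets are preserved), the monomials $\underline{x}^{[d]}$ and $\underline{x}^{[e]}$ need not go to unit multiples of themselves, and the ideals in (3) and (5) would not be stabilized. Everything else is a direct transcription of the formulas and identities already available from \thmref{thm: isomorphismthm}, which is why the conclusion indeed ``follows from the proof'' of that theorem.
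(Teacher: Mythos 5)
Your proposal is correct and is precisely the elaboration the paper intends by its one-line proof ``It follows from the proof of \thmref{thm: isomorphismthm}'': you specialize that theorem to $L'=L$, read (2) and (6) off the ``Moreover'' formulas, get (1) and (4) from the images of the generators, and get (3) and (5) from the contraction identities $\underline{x}^{[d]}L\cap k[\underline{x},z]$ and $\underline{x}^{[e]}L\cap S$ together with the fact that $d_i=d_{\sigma(i)}$ and $e_i=e_{\sigma(i)}$, so $\zeta(\underline{x}^{[d]})$ and $\zeta(\underline{x}^{[e]})$ are unit multiples of $\underline{x}^{[d]}$ and $\underline{x}^{[e]}$. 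You have correctly isolated the only nontrivial point (the $\sigma$-orbit invariance of $[d]$ and $[e]$), which is indeed established in the paper's proof of \thmref{thm: isomorphismthm} via the two-sided inequality $d_i\ge d'_{\sigma^{-1}(i)}$ and its symmetric counterpart.
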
	
\section{Stably isomorphisms}

In this section, we study the stably isomorphisms of double Danielewski varieties. A stably isomorphism between two double Danielewski varieties $L$ and $L'$ means an isomorphism between  $L^{[1]}$ and $L'^{[1]}$. We will find stably isomorphisms between some non-isomorphic double Danielewski varieties, which provide counterexamples to the Cancellation Problem.

First we recall the definition and basic facts of exponential maps.

\begin{defi}
	Let $R$ be a $k$-algebra and $\phi : R\rightarrow R^{[1]}$  a $k$-algebra homomorphism. Denote $W$ an indeterminate over $R$ and write  $\phi : R\rightarrow R[W]$. Then $\phi$ is called an $exponential  \ map\ on \ R$ if the following properties are satisfied:
	\begin{enumerate}
		\item  $\epsilon\phi_{W}$ is identity on $R$, where $\epsilon: R[W]\rightarrow R$ is evaluating $W$ to 0.
		\item  $\phi_{T}\phi_{W}=\phi_{W+T}$, where $\phi_{T}: R\rightarrow R[V]$ is extended to a homomorphism $\phi_{T}: R[W]\rightarrow R[T,W]$ by setting $\phi_{T}(W)=W$.
	\end{enumerate}
	
\end{defi}
The subalgebra $R^{\phi}=\{a\in R\ |\ \phi(a)=a\}$ of $R$ is called the ring of invariants of $\phi$.

\begin{lem}\cite{gupta2014zariski}\label{lem:factorialproperty}
	Let $R$ be an affine domain over a field $k$. Suppose that there exists a non-trivial exponential map $\phi$ on $R$. Then, $R^{\phi}$ is factorially closed in $R$, i.e., given $f,g\in R\backslash \{0\}$ the condition $fg\in R^{\phi} $ implies $f\in R^{\phi}$ and $g\in R^{\phi}$.
	
\end{lem}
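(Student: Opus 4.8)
The plan is to prove the statement by a $W$-degree argument, exploiting that $R$ — and hence the polynomial ring $R[W]$ — is an integral domain. First I would introduce a degree function
\[
\deg_W : R\setminus\{0\}\longrightarrow \mathbb{Z}_{\geq 0},\qquad a\longmapsto \deg_W \phi(a),
\]
the degree in $W$ of $\phi(a)=\phi_W(a)\in R[W]$. This is well defined: by axiom (1), $\epsilon\phi_W=\mathrm{id}_R$, so the constant term of $\phi(a)$ (its value at $W=0$) equals $a$; in particular $\phi(a)\neq 0$ whenever $a\neq 0$, so $\deg_W\phi(a)$ is a genuine nonnegative integer.

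The key observation I would record is that, for $a\neq 0$, one has $\deg_W\phi(a)=0$ if and only if $a\in R^{\phi}$. Indeed, if $\deg_W\phi(a)=0$ then $\phi(a)$ is constant in $W$, i.e. $\phi(a)\in R$; since its constant term is $a$, this forces $\phi(a)=a$, so $a\in R^{\phi}$. The converse is immediate from the definition of $R^{\phi}$. This lemma converts membership in $R^{\phi}$ into a numerical (degree) condition, which is what makes the additivity argument go through.

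Next I would use multiplicativity of $\phi$ together with the domain hypothesis. Since $R$ is a domain, so is $R[W]$, and therefore $\deg_W$ is additive with no cancellation of leading coefficients: $\deg_W\big(\phi(f)\phi(g)\big)=\deg_W\phi(f)+\deg_W\phi(g)$. Because $\phi$ is a $k$-algebra homomorphism, $\phi(fg)=\phi(f)\phi(g)$, whence
\[
\deg_W\phi(fg)=\deg_W\phi(f)+\deg_W\phi(g).
\]
Now suppose $f,g\in R\setminus\{0\}$ satisfy $fg\in R^{\phi}$. By the key observation, $\deg_W\phi(fg)=0$, so $\deg_W\phi(f)+\deg_W\phi(g)=0$; as both summands are nonnegative integers, each vanishes. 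Applying the key observation once more gives $f\in R^{\phi}$ and $g\in R^{\phi}$, which is exactly factorial closedness.

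As for difficulty, there is no deep obstacle: the entire argument rests on the well-definedness and additivity of $\deg_W$, both of which follow from $R$ being a domain, and on axiom (1) to identify degree-zero images with invariants. The only point worth flagging is that non-triviality of $\phi$ is not actually used for this particular statement — the proof is valid for \emph{every} exponential map on a domain — so I would keep the hypothesis only because it is the standing assumption in the surrounding discussion.
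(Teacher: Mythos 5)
Your proof is correct, and it is the standard degree argument: the paper itself states this lemma without proof (citing \cite{gupta2014zariski}), and the proof in that reference (going back to Crachiola and Makar-Limanov) is exactly your argument — well-definedness of $\deg_W\phi(\cdot)$ via $\epsilon\phi_W=\mathrm{id}_R$, additivity of degree in the domain $R[W]$, and the characterization $R^{\phi}=\{a : \deg_W\phi(a)=0\}$. Your side remark is also accurate: non-triviality of $\phi$ is not needed, since for the trivial exponential map $R^{\phi}=R$ is factorially closed anyway.
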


The following theorem presents some stably isomorphisms between non-isomorphic double Danielewski varieties.

\begin{thm}\label{thm: stablyisomorphism} Consider a double Danielewski variety
	\[L_{[d],[e]}=\frac{k[\underline{X},Y,Z,T]}{(\underline{X}^{[d]}Y-P(\underline{X},Z), \underline{X}^{[e]}T-Q(\underline{X},Y,Z))},\] where $[d],[e]\in \mathbb{N}^{n}_{\geq 1}, P(\underline{X},Z)$ is a monic polynomial in $Z$ with $r:=\deg_{Z}P(\underline{X},Z)$ $\geq 2$ and $Q(\underline{X},Y,Z)$ is a monic polynomial in $Y$ with $s:=\deg_{Y}Q(\underline{X},Y,Z)\geq 2$.
Write $P'(\underline{X},Z):=\frac{\partial (P(\underline{X},Z))}{\partial Z}$ and $Q'(\underline{X},Y,Z))):=\frac{\partial (Q(\underline{X},Y,Z))}{\partial Z}$.

Suppose that for all $i\in \{1,\ldots,n\}$,
	\[(P(\underline{X},Z)|_{X_{i}=0},P'(\underline{X},Z)|_{X_{i}=0})k[\underline{X}_{\hat{X_{i}}},Z]=k[\underline{X}_{\hat{X_{i}}},Z] \ \text{and}\ \]\[ (P(\underline{X},Z)|_{X_{i}=0},Q(\underline{X},Y,Z)|_{X_{i}=0},Q'(\underline{X},Y,Z)|_{X_{i}=0})k[\underline{X}_{\hat{X_{i}}},Y,Z]=k[\underline{X}_{\hat{X_{i}}},Y,Z].\] Then, for any $[e]\in \mathbb{N}_{\geq 2}^{n}$,
	\[L_{[d],[e]}^{[1]}\cong L_{[d],[e]-1}^{[1]}.\]
\end{thm}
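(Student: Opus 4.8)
The plan is to establish the stable isomorphism $L_{[d],[e]}^{[1]}\cong L_{[d],[e]-1}^{[1]}$ by constructing an explicit isomorphism between the two polynomial rings in one extra variable. First I would adjoin a new indeterminate $W$ to each coordinate ring and look for a substitution that trades the new free variable against a lowering of the exponent vector $[e]$ by $1$ in each component. The natural model is the classical Danielewski argument: the defining relation $\underline{X}^{[e]}T-Q(\underline{X},Y,Z)=0$ can be ``partially solved'' once one has an extra polynomial variable to absorb the discrepancy, and the hypothesis that $[e]\in\mathbb{N}_{\geq 2}^{n}$ guarantees that after subtracting $1$ from each $e_i$ the resulting exponents are still $\geq 1$, so $L_{[d],[e]-1}$ is again a variety of the type covered by \thmref{thm: isomorphismthm}. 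Concretely, I expect to build a $k$-algebra map from $L_{[d],[e]-1}[W]$ to $L_{[d],[e]}[W']$ (and its inverse) by sending the $T$-coordinate to a combination involving $W$ and the remaining coordinates, using the coprimality hypotheses to guarantee that the requisite polynomial identities can be solved.

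The key role of the two coprimality conditions is to produce, for each $i$, polynomials witnessing that $1$ lies in the stated ideals modulo $X_i$; by a patching or lifting argument over the $n$ coordinate hyperplanes $X_i=0$ one upgrades these to a single global relation expressing a suitable unit combination that lets the extra variable $W$ rewrite $T$. I would first treat the second (deeper) Danielewski layer: using the identity $(P|_{X_i=0},Q|_{X_i=0},Q'|_{X_i=0})k[\underline{X}_{\hat{X_i}},Y,Z]=k[\underline{X}_{\hat{X_i}},Y,Z]$, construct $a,b,c\in k[\underline{X},Y,Z]$ with $aP+bQ+cQ'\equiv 1$ modulo the ideal $(\underline{X})$ in the appropriate sense, and use this to define the image of $T$ so that the relation $\underline{X}^{[e]-1}\,\widetilde{T}-\widetilde{Q}=0$ is satisfied in $L_{[d],[e]}^{[1]}$ after the $W$-substitution. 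The first coprimality condition, involving $P$ and $P'=\partial P/\partial Z$, plays the analogous role for the $Y$-level relation and ensures the construction is compatible with the already-fixed action on $k[\underline{X},Z]$.

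The main obstacle will be verifying that the candidate map is a well-defined ring homomorphism that respects \emph{both} defining relations simultaneously, and then that it is invertible; this is where the exponential-map machinery and \lemref{lem:factorialproperty} should enter. I anticipate using a non-trivial exponential map on $L_{[d],[e]}$ (coming from the locally nilpotent derivation structure established in \cite{SunZeng2023}) to pin down the ring of invariants $k[\underline{x}]$, which both constrains the possible image of $W$ and certifies that the constructed surjection is injective by a dimension or factorial-closedness argument rather than by exhibiting a two-sided inverse by hand. The delicate point is the interplay between the two relations: solving the $T$-relation with the extra variable must not destroy the $Y$-relation, and it is precisely the pair of coprimality hypotheses on the hyperplanes $X_i=0$ that decouples these two layers enough to make the substitution consistent. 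Once the homomorphism $L_{[d],[e]-1}^{[1]}\to L_{[d],[e]}^{[1]}$ is shown surjective, equality of Krull dimensions (both equal to $n+2$, with one extra from the polynomial variable) forces it to be an isomorphism, exactly as in the final step of the proof of \thmref{thm: isomorphismthm}.
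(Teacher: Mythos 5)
Your outline correctly names the toolbox (an exponential map on $L_{[d],[e]}^{[1]}$, factorial closedness of its invariant ring via \lemref{lem:factorialproperty}, the coprimality hypotheses producing a unit modulo $x_i$, and a final surjection-plus-dimension count), but the proof itself is missing: you never write down the map, and the entire content of the theorem lies in that construction. Moreover, the paper does not do what you propose, namely build a homomorphism $L_{[d],[e]-1}^{[1]}\to L_{[d],[e]}^{[1]}$ and then struggle to check that it respects both defining relations --- the well-definedness problem you yourself flag as ``the main obstacle'' and leave unresolved. It runs the logic in the opposite direction: define a concrete exponential map $\xi$ on $C:=L_{[d],[e]}[w]$ (with $\xi(z)=z+\underline{x}^{[d]+[e]}V$, the values on $y,t$ forced by the relations, and $\xi(w)=w-x_iV$); exhibit explicit invariants $r_1=\underline{x}^{[d]+[e]-1}w+z$, then $r_2$ satisfying $\underline{x}^{[d]}r_2=P(\underline{x},r_1)$ and $r_3$ satisfying $\underline{x}^{[e]-1}r_3=Q(\underline{x},r_2,r_1)$, which lie in $C^{\xi}$ by factorial closedness; and use the coprimality hypotheses to manufacture a slice $s=\bigl(w-a(r_2,r_1)r_3\bigr)/x_i$ with $\xi(s)=s-V$. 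The slice gives $C=(C^{\xi})^{[1]}$ for free, and the relations satisfied by $r_1,r_2,r_3$ identify $C^{\xi}=k[\underline{x},r_1,r_2,r_3]$ with $L_{[d],[e]-1}$ (surjection plus equal Krull dimension, together with a computation modulo $x_i$ to pin down $C^{\xi}$ exactly and not merely after inverting $x_i$). None of these objects appears in your proposal; in particular the identity $Q(\underline{x},r_2,r_1)=\underline{x}^{[e]-1}r_3$, which is precisely where the exponent drops by one, is the step you would need to discover and verify, and nothing in your sketch produces it.

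Two smaller points. First, your suggested ``patching or lifting argument over the $n$ coordinate hyperplanes'' is not how the hypotheses enter: the argument fixes a single $i$, and the two ideal-theoretic hypotheses are combined (by multiplying the two ideals) into the single relation $(P|_{X_i=0},\,Q|_{X_i=0},\,P'|_{X_i=0}Q'|_{X_i=0})=(1)$, whose sole purpose is to make $P'Q'|_{x_i=0}$ invertible modulo $x_i$ so that the slice $s$ lies in $C$ and so that the kernel of $C^{\xi}\to C/x_iC$ can be controlled. Second, your closing argument (a surjection of affine domains of equal dimension is injective) is sound and is indeed how the paper identifies $C^{\xi}$ with $L_{[d],[e]-1}$, but it only becomes available after a well-defined surjection has been produced, which is exactly the part of the proof that your proposal does not supply.
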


\begin{proof}
	We use $L$ to represent $L_{[d],[e]}$.  Consider the exponential map $\xi : L\rightarrow L[V]$  on $L$ defined   by
	\begin{align}
		\xi(x_{i})&=x_{i},\  \text{for}\  i\in \{1,\ldots,n\},\notag\\
		\xi(z)&=z+\underline{x}^{[d]+[e]}V,\notag\\
		\xi(y)&=\frac{P(\underline{x},z+\underline{x}^{[d]+[e]}V)}{\underline{x}^{[d]}}=y+\underline{x}^{[e]}V\alpha(\underline{x},z,V),\notag\\
		\xi(t)&=\frac{Q(\underline{x},y+\underline{x}^{[e]}V\alpha(\underline{x},z,V), z+\underline{x}^{[e]+[d]}V)}{\underline{x}^{[e]}}=t+V\beta(\underline{x},y,z,V),\notag
	\end{align}
where $\alpha(\underline{x},z,V)\in k[\underline{x},z,V]$ and $\beta(\underline{x},y,z,V)\in k[\underline{x},y,z,V]$. Let $C=L[w]=L^{[1]}$ and extend $\xi$ to $C$ by defining $\xi(w)=w-x_{i}V$. Let $r_{1}=\underline{x}^{[d]+[e]-1}w+z$. Then $r_{1}\in C^{\xi}$. Now,
\[P(\underline{x},r_{1})-P(\underline{x},z)=\underline{x}^{[d]+[e]-1}(P'(\underline{x},z)|_{x_{i}=0}w+x_{i}\theta)\] for some $\theta \in C$. Hence
\begin{equation}\label{eq: P(x,f)expansion}
	\begin{split}
		P(\underline{x},r_{1})&=P(\underline{x},z)+\underline{x}^{[d]+[e]-1}(P'(\underline{x},z)|_{x_{i}=0}w+x_{i}\theta)\\
		&=\underline{x}^{[d]}y+\underline{x}^{[d]+[e]-1}((P'(\underline{x}|,z)|_{x_{i}=0}w+x_{i}\theta)\\
		&=\underline{x}^{[d]}r_{2},
	\end{split}
\end{equation}
where $r_{2}=y+\underline{x}^{[e]-1}((P'(\underline{x},z)|_{x_{i}=0}w+x_{i}\theta)\in C$. Since $\underline{x}^{[d]}r_{2}=P(\underline{x},r_{1})\in C^{\xi}$ and $C^{\xi}$ is factorially closed in $C$, we have $r_{2}\in C^{\xi}$. Then
\begin{align}
	Q(\underline{x},r_{2},r_{1})&=Q(\underline{x}, y+\underline{x}^{[e]-1}((P'(\underline{x},z)|_{x_{i}=0}w+x_{i}\theta),z+\underline{x}^{[d]+[e]-1}w)\notag\\
	&=Q(\underline{x},y,z)+\underline{x}^{[e]-1}P'(\underline{x},z)|_{x_{i}=0}Q'(\underline{x},y,z)|_{x_{i}=0}w+\underline{x}^{[e]}\rho\notag
\end{align}
for some $\rho\in C$. Therefore,
\begin{equation}\label{eq: Q(x,g,f)expansion}
	\begin{split}
		Q(\underline{x},r_{2},r_{1})&=\underline{x}^{[e]}t+(Q(\underline{x},r_{2},r_{1})-Q(\underline{x},y,z))\\
		&=\underline{x}^{[e]}t+\underline{x}^{[e]-1}(P'(\underline{x},z)|_{x_{i}=0}Q'(\underline{x},y,z)|_{x_{i}=0}w+x_{i}\rho)\\
		&=\underline{x}^{[e]-1}(P'(\underline{x},z)|_{x_{i}=0}Q'(\underline{x},y,z)|_{x_{i}=0}w+x_{i}t+x_{i}\rho)\\
		&=\underline{x}^{[e]-1}r_{3}
	\end{split}
\end{equation}
where $r_{3}=P'(\underline{x},z)|_{x_{i}=0}Q'(\underline{x},y,z)|_{x_{i}=0}w+x_{i}t+x_{i}\rho\in C$.

As $Q(\underline{x},r_{2},r_{1})=\underline{x}^{[e]-1}r_{3}\in C^{\xi}$ and $C^{\xi }$ is factorially closed in $C$ by \lemref{lem:factorialproperty}, we have $r_{3}\in C^{\xi}$. Then we obtain that, in $k[\underline{X}_{\hat{X_{i}}},Y,Z]$, \begin{align*}(P(\underline{X},Z)|_{X_{i}=0},Q(\underline{X},Y,Z)|_{X_{i}=0},
Q'(\underline{X},Y,Z)|_{X_{i}=0}
P'(\underline{X},Z)|_{X_{i}=0})=(1).\end{align*} So there exist $a(Y,Z), b(Y,Z),$
$c(Y,Z)\in k[\underline{X}_{\hat{X_{i}}},Y,Z]$ such that
\[Q'(\underline{X},Y,Z)|_{X_{i}=0}P'(\underline{X},Z)|_{X_{i}=0}a(\underline{X}_{\hat{X_{i}}},Y,Z)+\]\[Q(\underline{X},Y,Z)|_{X_{i}=0}b(\underline{X}_{\hat{X_{i}}},Y,Z)+P(\underline{X},Z)|_{X_{i}=0}c(\underline{X}_{\hat{X_{i}}},Y,Z)=1.\] Since $Q(\underline{x},y,z)|_{x_{i}=0}\in x_{i}C$ and $P(\underline{x},z)|_{x_{i}=0}\in x_{i}C$, we have,
\begin{equation}\label{eq: relativeprimeeq}
	Q'(\underline{x},y,z)|_{x_{i}=0}P'(\underline{x},z)|_{x_{i}=0}a(\underline{X}_{\hat{X_{i}}},y,z)+x_{i}\varrho=1
\end{equation}
for some $\varrho\in C$.

Let $s=\frac{w-a(r_{2},r_{1})r_{3}}{x_{i}}$. We first show that $s\in C$. Note that $a(r_{2},r_{1})-a(y,z)\in x_{i}C$. Let $a(r_{2},r_{1})-a(y,z)=x_{i}\gamma$ for some $\gamma \in C$. Then
\begin{align}
	w-r_{3}a(r_{2},r_{1})&=w-r_{3}a(y,z)-r_{3}(a(r_{2},r_{1})-a(y,z))\notag\\
	&=w-r_{3}a(y,z)-r_{3}x_{i}\gamma\notag\\
	&=w-a(y,z)(P'(\underline{x},z)|_{x_{i}=0}Q'(\underline{x},y,z)_{x_{i}=0}w+x_{i}t+x_{i}\rho)-r_{3}x_{i}\gamma\notag\\
	&=w(1-a(y,z)P'(\underline{x},z)|_{x_{i}=0}Q'(\underline{x},y,z)|_{x_{i}=0})\notag\\
	 &\quad\quad-x_{i}(a(y,z)t+a(y,z)\rho+r_{3}\gamma)\notag\\
	&=wx_{i}\varrho-x_{i}(a(y,z)t+a(y,z)\rho+r_{3}\gamma)\in x_{i}C\notag
\end{align}
Thus, $s\in C$ and $\xi(s)=s-V$. Therefore,
\begin{equation}\label{eq: Csliceexpansion}
	C=C^{\xi}[v]=(C^{\xi})^{[1]}.
\end{equation}
Let $O=k[x,r_{1},r_{2},r_{3}]$. Consider indeterminates $X, R_{1},R_{2}$ and $R_{3}$ over $k$ so that $k[X,R_{1},R_{2},R_{3}]=k^{[4]}$ and let
\begin{equation}\label{eq: O1isomortominusonecase}
	O_{1}=\frac{k[X,R_{1},R_{2},R_{3}]}{\underline{X}^{[d]}R_{2}-P(\underline{X},R_{1}),X^{[e]-1}R_{3}-Q(\underline{X},R_{2},R_{1})}\cong L_{[d],[e]-1}
\end{equation}

We will show that $O\cong O_{1}$. Define a $k$-algebra homomorphism $\Xi:k[X,R_{1},R_{2},R_{3}]\rightarrow O$ such that \[\Xi(X)=x, \Xi(R_{1})=r_{1},\Xi(R_{2})=r_{2},\Xi(R_{3})=r_{3}.\] We can deduce from \eqref{eq: P(x,f)expansion} and \eqref{eq: Q(x,g,f)expansion} that $\Xi$ is surjective, and thus $\Xi$ induces a $k$-algebra surjective homomorphism $\overline{\Xi}:O_{1}\rightarrow O$. Since $C[1/x_{i}]=O[1/x_{i}][w]=O[1/x_{i}]^{[1]}$, $O$ has dimension $n+1$. Therefore, as $O_{1}$ is an integral domain (Lemma 3.4 of \cite{SunZeng2023}) of dimension $n+1$, $\overline{\Xi}$ is an isomorphism.

Now, we can show that $C^{\xi}=O$. The inclusion $O\subseteq C^{\xi}$ is obvious. And since $C[1/x_{i}]=O[1/x_{i}][w]$, it follows that $O[1/x_{i}]=C^{\xi}[1/x_{i}]$. Therefore, to show that $O=C^{\xi}$, it suffices  to show that $x_{i}O=x_{i}C^{\xi}\cap O$. Since $x_{i}C^{\xi}=x_{i}C\cap C^{\xi}$ by the factorial closed property of $C^{\xi}$, it is enough to show that $x_{i}O=x_{i}C\cap O$, equivalently to show that the kernel of the map $\nu: O\rightarrow C/x_{i}C$ is $x_{i}O$. We let $\tilde{u}$ denote the image of $u$ in $C/x_{i}C$, for all $u\in C$. We see that
\begin{align}
	C/x_{i}C&=\frac{k[\underline{X}_{\hat{X_{i}}},Y,Z,T,W]}{(P(\underline{X},Z)|_{X_{i}=0}, Q(\underline{X},Y,Z)|_{X_{i}=0})}\notag\\
	&=\left(\frac{k[\underline{X}_{\hat{X_{i}}},Y,Z]}{(P(\underline{X},Z)|_{X_{i}=0}, Q(\underline{X},Y,Z)|_{X_{i}=0})}\right)[W,T]
	=k[\tilde{z},\tilde{y},\tilde{w},\tilde{t}].\notag
\end{align}
Clearly $\nu(r_{1})=\tilde{z},\nu(r_{2})=\tilde{y}$ and $\nu(r_{3})=P'(\tilde{\underline{x}},\tilde{z})|_{\tilde{x_{i}}=0}Q'(\tilde{\underline{x}},\tilde{y},\tilde{z})|_{\tilde{x_{i}}=0}\tilde{w}$. Now by \eqref{eq: relativeprimeeq}, $P'(\tilde{\underline{x}},\tilde{z})|_{\tilde{x_{i}}=0}Q'(\tilde{\underline{x}},\tilde{y},\tilde{z})|_{\tilde{x_{i}}=0}\in (C/x_{i}C)^{*}$. Hence, $\nu(O)=k[\tilde{\underline{x}}_{\hat{x_{i}}},\tilde{z},\tilde{y},\tilde{w}]$ and $C/x_{i}C=\nu(O)[\tilde{t}]=\nu(O)^{[i]}$. Thus $\dim\nu(O)=\dim(C/x_{i}C)-1=n$. Now as
\begin{align}
	O/x_{i}O\cong O_{1}/x_{i}O_{1}&=k[\underline{X}_{\hat{X_{i}}},R_{1},R_{2},R_{3}]/(P(\underline{X},R_{1})|_{X_{i}=0}, Q(\underline{X},R_{2},R_{1})|_{X_{i}=0})\notag\\
	&=\left(\frac{k[\underline{X}_{\hat{X_{i}}},R_{1},R_{2}]}{P(\underline{X},R_{1})|_{X_{i}=0}, Q(\underline{X},R_{2},R_{1})|_{X_{i}=0}}\right)^{[1]},\notag
\end{align}
we have $\dim(O/x_{i}O)=n=\dim\nu(O)$. Hence the kernel of $\nu$ is $x_{i}O$, and  thus $C^{\xi}=O$. Then by \eqref{eq: Csliceexpansion}, we obtain that $L_{[d],[e]}[w]=C=(C^{\xi})^{[1]}=O^{[1]}\cong O_{1}^{[1]}$, i.e., $L_{[d],[e]}^{[1]}\cong L_{[d],[e]-1}$

\end{proof}	

Finally, combining \thmref{thm: isomorphismthm} and \thmref{thm: stablyisomorphism}, we have
\begin{cor}
	Under the hypotheses of \thmref{thm: stablyisomorphism}, for every $[d],[e]\in \mathbb{N}^{n}_{\geq 1}$,
	\[L_{[d],[e]}\ncong L_{[d],[e]+1}\ \text{but}\  L_{[d],[e]}^{[1]}\cong L_{[d],[e]+1}^{[1]}.\] This provides counterexamples to the Cancellation Problem.
\end{cor}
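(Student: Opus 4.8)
The plan is to read the corollary as a direct synthesis of the two main results: \thmref{thm: stablyisomorphism} supplies the stable isomorphism, while \thmref{thm: isomorphismthm} obstructs a genuine isomorphism.

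First I would establish $L_{[d],[e]}^{[1]}\cong L_{[d],[e]+1}^{[1]}$. Since $[e]\in \mathbb{N}^{n}_{\geq 1}$, the shifted tuple $[e]+1$ lies in $\mathbb{N}^{n}_{\geq 2}$, so \thmref{thm: stablyisomorphism} is applicable with $[e]+1$ playing the role of its $[e]$. The coprimality hypotheses imposed there are conditions on $P$ and $Q$ only and do not involve the exponent tuples; as $L_{[d],[e]}$ and $L_{[d],[e]+1}$ are built from one and the same pair $(P,Q)$, these hypotheses hold for the pair at hand. Invoking \thmref{thm: stablyisomorphism} then gives $L_{[d],[e]+1}^{[1]}\cong L_{[d],([e]+1)-1}^{[1]}=L_{[d],[e]}^{[1]}$.

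Next I would rule out $L_{[d],[e]}\cong L_{[d],[e]+1}$ via \thmref{thm: isomorphismthm}. Since $r\geq 2$ and $s\geq 2$ by hypothesis, both varieties satisfy the standing condition \eqref{eq: conditions}, so the theorem applies with $L=L_{[d],[e]}$ and $L'=L_{[d],[e]+1}$. An isomorphism would force its condition $(2)$, i.e. the multiset equality $\{e_{1},\ldots,e_{n}\}=\{e_{1}+1,\ldots,e_{n}+1\}$. This is impossible: the two multisets have different largest entries (equivalently, their sums differ by $n\geq 1$). Hence $L_{[d],[e]}\ncong L_{[d],[e]+1}$.

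Putting the two steps together produces non-isomorphic affine varieties with isomorphic cylinders, which is precisely a counterexample to the Cancellation Problem, with $F=L_{[d],[e]}$ and $G=L_{[d],[e]+1}$. The argument is essentially bookkeeping built on the two theorems; the only point that needs care is the index alignment, namely applying \thmref{thm: stablyisomorphism} (stated as a descent from $[e]$ to $[e]-1$) at $[e]+1$ rather than at $[e]$, together with the observation that its coprimality hypotheses survive this shift because they concern $P$ and $Q$ alone.
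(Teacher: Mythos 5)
Your proposal is correct and matches the paper's intent exactly: the paper gives no written proof beyond ``combining Theorem~\ref{thm: isomorphismthm} and Theorem~\ref{thm: stablyisomorphism},'' and your argument supplies precisely that combination, using condition (2) of Theorem~\ref{thm: isomorphismthm} (the multiset equality on the $e_i$, which fails since the sums differ by $n$) for the non-isomorphism and Theorem~\ref{thm: stablyisomorphism} applied at $[e]+1\in\mathbb{N}^{n}_{\geq 2}$ for the cylinder isomorphism. The index alignment and the observation that the coprimality hypotheses depend only on $P$ and $Q$ are exactly the right points of care.
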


\end{document}